\theoremstyle{plain}
\newtheorem*{theorem}{Theorem}
\newtheorem{property}{Property}
\theoremstyle{definition}
\newtheorem*{remark}{Remark}
\begin{document}

\title{A Sufficient Condition for Nilpotency in a Finite Group}

\author{Benjamin Baumslag \and James Wiegold}

\maketitle

\begin{abstract}
It is shown that finite groups in which the order of the product of every pair of elements of co-prime order is the product of the orders, is nilpotent.
\end{abstract}

It is well known that every finite nilpotent group $G$ satisfies the following property.

\begin{property}\label{property:a}
The product of any two elements of $G$ of co-prime orders $k$ and $m$ has order $km$.
\end{property}

It does not seem to be known that the converse of this result holds: certainly it does not appear in any standard text that we know. The purpose of this short note is to establish this converse. A suitable reference for all terminology and results used here is \cite{huppert}.

\begin{theorem}
Every finite group $G$ satisfying Property~\ref{property:a} is nilpotent.
\end{theorem}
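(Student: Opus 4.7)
I would proceed by minimal counterexample, inducting on $|G|$. Suppose $G$ is a finite non-nilpotent group of smallest order satisfying Property~\ref{property:a}. Since Property~\ref{property:a} manifestly inherits to subgroups (the order of a product is computed the same way in the subgroup as in $G$), by the inductive hypothesis every proper subgroup of $G$ is nilpotent---equivalently, $G$ is a minimal non-nilpotent group. Schmidt's classical structure theorem for such groups (see \cite{huppert}) then applies: $|G| = p^{\alpha} q^{\beta}$ for distinct primes $p$ and $q$; the Sylow $p$-subgroup $P$ is normal and unique; the Sylow $q$-subgroup $Q = \langle y \rangle$ is cyclic of order $q^{\beta}$; $G' = P$; and, since $G$ is non-nilpotent, $y$ acts non-trivially on $P$.

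The core calculation is as follows. Pick $x \in P$ with $yxy^{-1} \neq x$. Since $x$ has $p$-power order and $y$ has order $q^{\beta}$, Property~\ref{property:a} yields $|xy| = |x|\, q^{\beta}$, so
\[
(xy)^{q^{\beta}} \;=\; x\,(yxy^{-1})\,(y^{2} x y^{-2}) \cdots (y^{q^{\beta}-1} x y^{-(q^{\beta}-1)}) \in P
\]
must have order exactly $|x|$. To derive a contradiction, I would reduce to the case that $P$ is elementary abelian by passing to the Frattini quotient $\bar G = G/\Phi(P)$: this quotient is still non-nilpotent (else $[P, y] \subseteq \Phi(P)$, giving $G' \le \Phi(P) < P$, against $G' = P$), and a careful lifting argument shows Property~\ref{property:a} descends to $\bar G$; minimality of $|G|$ then forces $\Phi(P) = 1$.

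With $P$ elementary abelian, regard it as an $\mathbb{F}_{p}[Q]$-module. By Maschke's theorem (applicable by coprimality of $|P|$ and $|Q|$), $P = P^{y} \oplus V$ where $V = (y-1)P$ and $y-1$ is invertible on $V$. Non-triviality of the action yields $V \neq 0$, so pick $x \in V \setminus \{1\}$; then $|x| = p$. Written additively in the abelian $P$, the displayed product becomes $(1 + y + \cdots + y^{q^{\beta}-1})\,x$. Since $(y-1)(1 + y + \cdots + y^{q^{\beta}-1}) = y^{q^{\beta}} - 1 = 0$ and $y-1$ is invertible on $V$, the operator $1 + y + \cdots + y^{q^{\beta}-1}$ vanishes on $V$; hence $(xy)^{q^{\beta}} = 1$, contradicting its supposed order $p$. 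The main obstacle I anticipate is the Frattini descent step, since naive lifts of $p$-power-order elements do not preserve order, and extracting the order of a product in $\bar G$ from that in $G$ requires careful use of Hall subgroups of the solvable group $G$ together with the coprimality of the $p'$-orders involved to $|\Phi(P)|$.
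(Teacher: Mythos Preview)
Your approach via a minimal counterexample and Schmidt's theorem is natural, but the Frattini descent step is not merely an ``obstacle''---it fails outright, and your own final computation shows why. In $\bar G = G/\Phi(P)$ one has $\bar P$ elementary abelian with $\bar y$ acting non-trivially; picking $\bar x \in [\bar P,\bar y]\setminus\{0\}$ exactly as you do at the end, the norm calculation gives $(\bar x\,\bar y)^{q^{\beta}} = (1+\bar y+\cdots+\bar y^{\,q^{\beta}-1})\bar x = 0$, so $|\bar x\,\bar y| = q^{\beta} \neq p\,q^{\beta} = |\bar x|\,|\bar y|$. Thus Property~\ref{property:a} \emph{never} holds in $\bar G$ once the action on $\bar P$ is non-trivial, so it cannot have descended from $G$, and the minimality argument forcing $\Phi(P)=1$ collapses. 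Nor does the computation lift back to $G$ to give a direct contradiction: for a preimage $x\in P$ of such an $\bar x$, Property~\ref{property:a} in $G$ yields only that $(xy)^{q^{\beta}}$ is an element of $\Phi(P)$ of order $|x|$, which is no contradiction since $\Phi(P)$ may well contain elements of that order.

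By contrast, the paper's proof is entirely elementary and avoids Schmidt's theorem, Maschke's theorem, and any passage to quotients (where, as you now see, Property~\ref{property:a} is lost). One fixes Sylow subgroups $S_1,\dots,S_r$, one per prime, and uses Property~\ref{property:a} directly to show that the multiplication map $S_1\times\cdots\times S_r\to G$ is injective; a cardinality count then gives $G=S_1S_2\cdots S_r$. Any conjugate of an element of $S_1$ therefore decomposes as $s_1s_2\cdots s_r$, and Property~\ref{property:a} again forces $s_2=\cdots=s_r=1$, so each $S_i$ is normal and $G$ is nilpotent.
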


\begin{proof}
Note first that Property~\ref{property:a} extends to more than two elements of mutually co-prime orders, in the obvious way. What we do is to show that every Sylow subgroup is normal, and this is enough to establish nilpotency. Suppose that $G$ satisfies Property~\ref{property:a}, let $p_1$, $p_2$, \dots, $p_r$ be the distinct primes dividing the order of $G$, and $S_i$ a Sylow $p_i$-subgroup, for each $i$. The first step is to establish the equality
\begin{equation}\label{eq:1}
G=S_1S_2\cdots S_r.
\end{equation}

We do this by counting the number of elements on the right-hand side. Suppose that $s_i$, $t_i$ are elements of $S_i$ for $i=1$, $2$, \dots, $r$, and suppose that we have an equality
\[
s_1s_2\cdots s_r=t_1t_2\cdots t_r.
\]
Then
\[
s_1s_2\cdots s_{r-1}=t_1t_2\cdots t_rs^{-1}_r.
\]

If $t_rs^{-1}_r$ is not the identity, then by Property~\ref{property:a} the element on the right-hand side has order divisible by $p_r$, whereas that on the left-hand side does not. This is a contradiction, and so $s_r=t_r$. In the same way $s_i=t_i$ for each $i$. Thus, a count of the number of elements in the product $S_1S_2\cdots S_r$ shows that there are as many as there are elements in $G$, and this establishes equality~\eqref{eq:1}. Consider now a conjugate $x$ of an arbitrary element of the Sylow subgroup $S_1$. Then by what we just saw, $x$ is a product $s_1s_2\cdots s_r$ with obvious notation. Since $x$ is of order a power of $p_1$, Property~\ref{property:a} tells us that $s_2=s_3=\cdots=s_r=1$ and so that $x$ is in $S_1$. Thus $S_1$ is normal, and the same goes for all the other $S_i$, so all $S_i$ are normal and $G$ is nilpotent, as required.
\end{proof}

\begin{remark}
We do not know whether every finite group $G$ has a set of Sylow subgroups, one for each prime dividing the order of $G$, such that equality \eqref{eq:1} holds. Certainly soluble groups do, because of Philip Hall's celebrated theorem (see \cite[p.~665]{huppert}) stating that every soluble group $G$ has a set of Sylow subgroups that permute in pairs. Since the Sylow subgroups generate $G$, the fact that Sylow subgroups permute in pairs easily yields equation~\eqref{eq:1}.

On the other hand, some insoluble groups have this property. As an easiest example, consider the alternating group $A_5$ on $1$, $2$, $3$, $4$, $5$. It has the following Sylow-$2$, Sylow-$3$, and Sylow-$5$ subgroups respectively: $P=\langle(1,2)(3,4),(1,3)(2,4)\rangle$, $Q=\langle(1,2,3)\rangle$, $R=\langle(1,2,3,4,5)\rangle$. Here $PQ$ is the alternating group on $1$, $2$, $3$, $4$, of order $12$; since $R$ is of order $5$, we have $A_5=PQR$. Note that the Sylow subgroups have to be chosen carefully: not every choice will do. Other small simple groups have the property under discussion, as M.F.~Newman has pointed out in correspondence.
\end{remark}

\textbf{Acknowledgment}. We thank Professor Alan Camina for suggesting this topic to us.

\end{document}